\newtheorem{definition}{Definition}[section]
\newtheorem{theorem}[definition]{Theorem}
\newtheorem{corollary}[definition]{Corollary}
\newtheorem{proposition}[definition]{Proposition}
\newtheorem{question}[definition]{Question}
\newtheorem{remark}[definition]{Remark}
\newcommand{\Romannum}[1]{\uppercase\expandafter{\romannumeral #1}}
\numberwithin{equation}{section}
\newcommand\keywordsname{Key words}
\newcommand\AMSname{AMS subject classifications}
\newenvironment{@abssec}[1]{%
     \if@twocolumn
       \section*{#1}%
     \else
       \vspace{.05in}\footnotesize
       \parindent .2in
         {\upshape\bfseries #1. }\ignorespaces
     \fi}
     {\if@twocolumn\else\par\vspace{.1in}\fi}
\begin{document}

\title{Some remarks on $P$, $P_0$, $B$ and $B_0$ tensors
\footnote{P. Yuan's research is supported by the NSF of China (Grant No. 11271142) and
the Guangdong Provincial Natural Science Foundation(Grant No. S2012010009942),
L. You's research is supported by the Zhujiang Technology New Star Foundation of
Guangzhou (Grant No. 2011J2200090) and Program on International Cooperation and Innovation, Department of Education,
Guangdong Province (Grant No.2012gjhz0007).}}
\author{Pingzhi Yuan\footnote{{\it{Corresponding author:\;}}yuanpz@scnu.edu.cn.}, \qquad  Lihua You \footnote{{\it{Email address:\;}}ylhua@scnu.edu.cn.}}
\vskip.2cm
\date{{\small
School of Mathematical Sciences, South China Normal University,\\
Guangzhou, 510631, P.R. China\\
}}
\maketitle

\begin{abstract}
Recently, Song and Qi extended the concept of $P$, $P_0$ and $B$ matrices to $P$, $P_0$, $B$ and $B_0$ tensors,
 obtained some properties about these tensors, and  proposed many questions for further research. In this paper,
 we answer three questions mentioned as above and obtain  further results about  $P$, $P_0$, $B$ and $B_0$ tensors.

\vskip.2cm \noindent{\it{AMS classification:}}   15A69; 15A18
 \vskip.2cm \noindent{\it{Keywords:}}  $P$ tensor; $P_0$ tensor; $B$ tensor; $B_0$ tensor.
\end{abstract}

\section{Introduction}

\hskip.6cm Since the work of  Lim \cite{Li05} and Qi \cite{Qi05}, the study  of tensors (and hypergraphs) and their various applications has attracted much attention and interest.

A real $m$th  order $n$-dimensional tensor (hypermatrix)  $\mathbb{A}= (a_{i_1i_2\ldots i_m})$ is a multi-array of real entries
 $a_{i_1i_2\ldots i_m}$,  where $i_j\in [n]=\{1,2,\ldots, n\}$ for $j\in [m]$.
Denote the set of all real $m$th order $n$-dimensional tensors by $T_{m,n}$. Then $T_{m,n}$ is a linear space of dimension $n^m$.
Let $\mathbb{A}= (a_{i_1i_2\ldots i_m})\in T_{m,n}$, if the entries $a_{i_1i_2\ldots i_m}$ are invariant under any permutation of their indices,
then $\mathbb{A}$ is called a symmetric tensor. Denote the set of all real $m$th order $n$-dimensional symmetric tensors by $S_{m,n}$.
Then $S_{m,n}$ is a linear subspace of $T_{m,n}$.

Let $\mathbb{A}= (a_{i_1i_2\ldots i_m})\in T_{m,n}$ and $x\in \mathfrak{R}^n$. Then $\mathbb{A}x^m$ is a homogeneous polynomial of degree $m$, defined by
$$\mathbb{A}x^m=\sum\limits_{i_1,\ldots, i_m=1}^{n}a_{i_1\ldots i_m}x_{i_1}\ldots x_{i_m}.$$

A tensor $\mathbb{A}\in T_{m,n}$ is called positive semi-definite if for any vector $x\in \mathfrak{R}^n$, $\mathbb{A}x^m\geq 0$,
and $\mathbb{A}\in T_{m,n}$  is called positive definite if for any nonzero vector $x\in \mathfrak{R}^n$, $\mathbb{A}x^m> 0$.
Clearly, if $m$ is odd, there is no nontrivial positive semi-definite tensors.

It is well known that $P$  matrices  and $P_0$ matrices which were first studied systematically by Fiedler and Pt$\acute{a}$k \cite{F62}
have a long history and wide applications in linear complementarity problems,
variational inequalities and nonlinear complementarity problems and so on.
In 2001, Pena proposed and studied $B$ matrices \cite{Pena01}, obtained many nice properties and applications of such matrices \cite{Pena01, Pena03},
 and proved that the class of $B$ matrices is a subclass of $P$ matrices  \cite{Pena01}.
Recently, Song and Qi extended the concept of $P$, $P_0$ and $B$ matrices to $P$, $P_0$, $B$ and $B_0$ tensors,
 obtained some nice properties about these tensors in \cite{SQ2, SQ1}, and they also proposed many questions for further research.

Let $\mathbb{A}= (a_{i_1i_2\ldots i_m})\in T_{m,n}$ and $x\in C^n$. Then $\mathbb{A}x^{m-1}$ is a vector in $C^n$ with its $i$th components as
\begin{equation}\label{eq1}
(\mathbb{A}x^{m-1})_i=\sum\limits_{i_2,\ldots, i_m=1}^{n}a_{ii_2\ldots i_m}x_{i_2}\ldots x_{i_m}.
\end{equation}

\noindent for $i\in [n]$.

 \begin{definition}{\rm (\cite{SQ1}, Definition 2)}\label{defn11}
Let $\mathbb{A}= (a_{i_1i_2\ldots i_m})\in T_{m,n}$.  We say that $\mathbb{A}$ is

{\rm (a) } a $P_0$ tensor if for any nonzero vector $x\in \mathfrak{R}^n$, there exists $i\in [n]$ such that $x_i\not=0$ and $x_i(Ax^{m-1})_i\geq 0$;

{\rm (b) } a $P$ tensor if for any nonzero vector $x\in \mathfrak{R}^n$, $\max\limits_{i\in [n]}x_i(Ax^{m-1})_i> 0$.
 \end{definition}

 \begin{definition}{\rm (\cite{SQ1}, Definition 3)}\label{defn12}
Let $\mathbb{A}= (a_{i_1i_2\ldots i_m})\in T_{m,n}$. We say that $\mathbb{A}$ is a $B$ tensor if for all $i\in [n]$,
$\sum\limits_{i_2, \ldots, i_m=1}^na_{ii_2\ldots i_m}>0$
and
$\frac{1}{n^{m-1}}(\sum\limits_{i_2, \ldots, i_m=1}^na_{ii_2\ldots i_m})>a_{ij_2\ldots j_m} \mbox { for all } (j_2,  \ldots, j_m)$ $\not=(i,\ldots, i).$
 We say that $\mathbb{A}$ is a $B_0$ tensor if for all $i\in [n]$,
$\sum\limits_{i_2, \ldots, i_m=1}^na_{ii_2\ldots i_m}\geq 0$
and
$$\frac{1}{n^{m-1}}(\sum\limits_{i_2, \ldots, i_m=1}^na_{ii_2\ldots i_m})\geq a_{ij_2\ldots j_m} \mbox { for all } (j_2,  \ldots, j_m)\not=(i,\ldots, i).$$
 \end{definition}

 In \cite{Qi05}, the definitions of eigenvalues, $H$-eigenvalues, $E$-eigenvalues and $Z$-eigenvalues of tensors in $S_{m,n}$ were proposed.
In \cite{SQ1}, Song and Qi extended there concepts to tensors in $T_{m,n}$,   and they obtain some properties about $P$ and $P_0$ tensors as follows.

 \begin{theorem}{\rm (\cite{SQ1}, Theorem 1)}\label{thm13}
 Let $\mathbb{A}\in T_{m,n}$ be a $P (P_0)$ tensor. Then

{\rm (1) } When $m$  is even, all of its $H$-eigenvalues  and $Z$-eigenvalues of $\mathbb{A}$ are positive (nonnegative).

 {\rm (2) } A symmetric tensor is a $P (P_0)$  tensor if and only if it is positive (semi-)definite.

 {\rm (3) } There does not exist an odd order symmetric $P$ tensor.

{\rm  (4) } If an odd order non-symmetric $P$ tensor exists, then it has no $Z$-eigenvalues.

{\rm  (5) }  An odd order $P_0$ tensor has no nonzero $Z$-eigenvalues.
 \end{theorem}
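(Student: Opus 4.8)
The plan is to reduce all five parts to two elementary facts and a compactness argument. The first fact: if $\lambda$ is an $H$-eigenvalue (resp.\ a $Z$-eigenvalue) of $\mathbb A$ with real eigenvector $x\neq0$, then componentwise $x_i(\mathbb Ax^{m-1})_i=\lambda x_i^{m}$ (resp.\ $x_i(\mathbb Ax^{m-1})_i=\lambda x_i^{2}$). The second fact: for \emph{any} $\mathbb A\in T_{m,n}$ one has $\mathbb Ax^{m}=\sum_{i=1}^{n}x_i(\mathbb Ax^{m-1})_i$ (relabel the summation index), and when $\mathbb A\in S_{m,n}$ one has in addition $\nabla(\mathbb Ax^{m})=m\,\mathbb Ax^{m-1}$. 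Part~(1) follows at once: let $\lambda$ be such an eigenvalue with eigenvector $x\neq0$ and fix $i_0$ with $x_{i_0}\neq0$. When $m$ is even, $x_i^{m}\ge0$ for every $i$ and $x_{i_0}^{m}>0$, and $x_i^2\ge0$, $x_{i_0}^2>0$ always; so applying the $P_0$ condition to $x$ yields a $j$ with $x_j\neq0$ and $\lambda x_j^{m}\ge0$ (resp.\ $\lambda x_j^{2}\ge0$), forcing $\lambda\ge0$, while the $P$ condition gives $\max_i\lambda x_i^{m}>0$ (resp.\ $\max_i\lambda x_i^{2}>0$), which, the $x_i^{m}$ (resp.\ $x_i^{2}$) being nonnegative, forces $\lambda>0$.

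For~(4) and~(5) I would exploit the reflection $x\mapsto-x$. Suppose $\mathbb A$ has odd order and $\mathbb Ax^{m-1}=\lambda x$ with $\|x\|=1$; since $m-1$ is even, $\mathbb A(-x)^{m-1}=(-1)^{m-1}\mathbb Ax^{m-1}=\lambda x=(-\lambda)(-x)$, so $-x$ is a $Z$-eigenvector for $-\lambda$. Feeding both eigenpairs into the sign analysis of~(1) (its $Z$-eigenvalue part uses no parity hypothesis) gives $\lambda>0$ and $-\lambda>0$ in the $P$ case, which is impossible, and $\lambda\ge0$ together with $-\lambda\ge0$, i.e.\ $\lambda=0$, in the $P_0$ case. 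This yields~(4) — in fact it shows that no odd order $P$ tensor, symmetric or not, admits a $Z$-eigenvalue — and~(5). Part~(3) is then a compactness consequence: if $\mathbb A\in S_{m,n}$, the continuous function $\mathbb Ax^{m}$ attains its minimum on the unit sphere at some $x^{*}$, and Lagrange multipliers together with $\nabla(\mathbb Ax^{m})=m\,\mathbb Ax^{m-1}$ force $m\,\mathbb A(x^{*})^{m-1}=2\mu x^{*}$, so $x^{*}$ is a $Z$-eigenvector; thus every symmetric tensor has a real $Z$-eigenvalue, and by~(4) an odd order $P$ tensor cannot, so no odd order symmetric $P$ tensor exists.

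For~(2), fix $\mathbb A\in S_{m,n}$. If $\mathbb A$ is positive definite (resp.\ positive semi-definite), then for $x\neq0$ the identity $\sum_i x_i(\mathbb Ax^{m-1})_i=\mathbb Ax^{m}$ is $>0$ (resp.\ $\ge0$); dropping the vanishing terms with $x_i=0$ leaves a nonempty sum, so some term with $x_i\neq0$ is $>0$ (resp.\ $\ge0$), i.e.\ $\mathbb A$ is a $P$ (resp.\ $P_0$) tensor. For the converse I would again use the variational picture from~(3): a minimizer $x^{*}$ of $\mathbb Ax^{m}$ on the unit sphere is a $Z$-eigenvector whose eigenvalue equals $\mathbb A(x^{*})^{m}$, so the minimum of $\mathbb Ax^{m}$ over the sphere is a $Z$-eigenvalue. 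When $m$ is even, (1) makes that minimum $>0$ (resp.\ $\ge0$), hence $\mathbb Ax^{m}>0$ (resp.\ $\ge0$) for all $x\neq0$ by homogeneity. When $m$ is odd, the $P$ case is vacuous by~(3); in the $P_0$ case, (5) forces every $Z$-eigenvalue — hence both the minimum and the maximum of $\mathbb Ax^{m}$ on the sphere — to be $0$, so $\mathbb Ax^{m}\equiv0$ and $\mathbb A$ is trivially positive semi-definite.

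I expect the genuine obstacle to lie in the ``$P\Rightarrow$ positive definite'' (and ``$P_0\Rightarrow$ positive semi-definite'') half of~(2): the rest is sign bookkeeping, but this step has to turn a purely componentwise hypothesis into a global statement about the form $\mathbb Ax^{m}$, and the only bridge I see is the characterization of the extrema of $\mathbb Ax^{m}$ on the unit sphere as $Z$-eigenvalues — which is exactly what lets~(1) be applied. A minor point to get right is the degenerate odd-order situation, where ``positive semi-definite'' already forces $\mathbb Ax^{m}\equiv0$.
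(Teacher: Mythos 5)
Your proposal is correct, but note that the paper you are being compared against does not actually prove this theorem: it is quoted verbatim from \cite{SQ1} and used as a black box, so there is no in-paper proof to match. Judged on its own merits, your argument is sound and self-contained. The two identities you isolate ($x_i(\mathbb Ax^{m-1})_i=\lambda x_i^m$ for $H$-eigenpairs, $=\lambda x_i^2$ for $Z$-eigenpairs, and $\mathbb Ax^m=\sum_i x_i(\mathbb Ax^{m-1})_i$) do give (1) and the forward half of (2) by pure sign bookkeeping; the reflection $x\mapsto -x$ correctly yields (4) and (5), and you are right to flag that the $Z$-eigenvalue half of (1) needs no parity assumption; and the Lagrange-multiplier/compactness argument (every symmetric tensor attains the extrema of $\mathbb Ax^m$ on the unit sphere at $Z$-eigenvectors whose eigenvalues equal the extremal values) is exactly the standard bridge from the componentwise $P/P_0$ condition to definiteness, which is what \cite{SQ1} and Qi's earlier work also rely on. Your identification of the ``$P\Rightarrow$ positive definite'' direction as the only nontrivial step, and your handling of the degenerate odd-order $P_0$ case ($\mathbb Ax^m\equiv 0$), are both accurate. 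One remark worth making: the present paper's Proposition 2.1 shows by a one-line substitution ($x=ue_k$ with $u$ of both signs) that \emph{no} odd order $P$ tensor exists at all, which renders (4) vacuous and gives (3) without any appeal to the existence of $Z$-eigenvectors; your route to (3) is correct but considerably less elementary than that observation.
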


Based on the result (3) of Theorem \ref{thm13}, Song and Qi proposed the following Question \ref{ques14}.

\begin{question}{\rm (\cite{SQ1}, Question 1)}\label{ques14}
 Is there an odd order non-symmetric $P$ tensor? Is there an odd order nonzero non-symmetric $P_0$ tensor?
 \end{question}

It is well known that each $B$ matrix is a $P$ matrix, namely,  when $m=2$, each $B$ tensor is a $P$ tensor.
In \cite{SQ1}, the authors noted that when $m$ is odd, in general, a $B (B_0)$ tensor is not a $P (P_0)$ tensor.
For example, let $a_{i\ldots i} = 1$ and $a_{i_1\ldots i_m} = 0$ otherwise.
Then $\mathbb{A} = (a_{i_1\ldots i_m})$ is the identity tensor.
But when $m$ is odd, the identity tensor is a $B$ tensor,  not a $P$ or $P_0$ tensor. Thus they proposed the following Question \ref{ques15}.

\begin{question}{\rm (\cite{SQ1}, Question 4)}\label{ques15}
When $m \geq 4$ and is even, is a $B (B_0)$ tensor a $P (P_0)$ tensor?
 \end{question}

 In \cite{SQ2}, the authors proved that an even order symmetric $B$ tensor is positive definite,
 and  an even order symmetric $B_0$ tensor is positive semi-definite.   Combining the result (2) of Theorem \ref{thm13},
 we know that an even order symmetric $B$ tensor is a $P$ tensor and an even order symmetric $B_0$ tensor is a $P_0$ tensor.
Further, the author proposed the following question.

\begin{question}{\rm (\cite{SQ2}, Question 2)}\label{ques16}
Can we show that an even order non-symmetric $B$ tensor is a $P$ tensor and an even order non-symmetric $B_0$ tensor is a $P_0$ tensor?
\end{question}

 In this paper, we answer the above three questions and obtain  further results about  $P$, $P_0$, $B$ and $B_0$ tensors.

\section{Some remarks on $P$ and $P_0$ tensors }
\hskip.6cm In this section, we  show that there does not exist an odd order $P$ tensor
and there exist odd  order nonzero non-symmetric $P_0$  tensors, thus we give the answer of Question \ref{ques14}.

\begin{proposition}\label{prop21}
There does not exist an odd order  $P$ tensor.
 \end{proposition}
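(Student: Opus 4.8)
The plan is to exploit the fact that for odd $m$, the polynomial map $x \mapsto \mathbb{A}x^{m-1}$ is homogeneous of even degree $m-1$, so that replacing $x$ by $-x$ leaves $\mathbb{A}x^{m-1}$ unchanged while flipping the sign of each coordinate $x_i$. Concretely, I would take an arbitrary nonzero $x \in \mathfrak{R}^n$ and look at the two vectors $x$ and $-x$. Since $m-1$ is even, $(\mathbb{A}(-x)^{m-1})_i = (\mathbb{A}x^{m-1})_i$ for every $i \in [n]$, and hence $(-x_i)(\mathbb{A}(-x)^{m-1})_i = -\,x_i(\mathbb{A}x^{m-1})_i$ for every $i$.

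Now suppose, for contradiction, that $\mathbb{A}$ were an odd order $P$ tensor. Applying the defining inequality of Definition \ref{defn11}(b) to the nonzero vector $x$ gives $\max_{i\in[n]} x_i(\mathbb{A}x^{m-1})_i > 0$, so there is an index $k$ with $x_k(\mathbb{A}x^{m-1})_k > 0$. Applying it instead to the nonzero vector $-x$ gives $\max_{i\in[n]} (-x_i)(\mathbb{A}(-x)^{m-1})_i > 0$; but by the sign computation above this maximum equals $\max_{i\in[n]} \bigl(-x_i(\mathbb{A}x^{m-1})_i\bigr) = -\min_{i\in[n]} x_i(\mathbb{A}x^{m-1})_i$. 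Hence $\min_{i\in[n]} x_i(\mathbb{A}x^{m-1})_i < 0$, so there is an index $j$ with $x_j(\mathbb{A}x^{m-1})_j < 0$. Neither of these conclusions is contradictory by itself, so a single application is not enough; the sharper observation is that we should sum over $i$ or argue more carefully — in fact the clean contradiction comes from noting that the quantities $x_i(\mathbb{A}x^{m-1})_i$ cannot all be positive and simultaneously have their negatives all admit a positive maximum unless... Let me restructure: it suffices to exhibit one nonzero $x$ for which $x_i(\mathbb{A}x^{m-1})_i \le 0$ for all $i$, which would violate (b). I expect this is achieved by a direct choice, e.g. taking $x$ to be a suitable standard basis vector $e_k$ scaled by a sign, where $(\mathbb{A}e_k^{m-1})_k = a_{kk\ldots k}$ and $x_k(\mathbb{A}x^{m-1})_k = \pm a_{kk\ldots k}$ depending on the sign of $x_k$; choosing the sign opposite to that of $a_{kk\ldots k}$ (or arbitrarily if $a_{kk\ldots k}=0$) makes this coordinate $\le 0$, and all other coordinates of $x$ vanish, so the $\max$ over $i\in[n]$ of $x_i(\mathbb{A}x^{m-1})_i$ is $\le 0$, contradicting (b).

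The main obstacle is making sure the "all other coordinates vanish" case is handled by the definition: when $x = c\,e_k$, for $i \neq k$ we have $x_i = 0$, so $x_i(\mathbb{A}x^{m-1})_i = 0$, which is fine since we only need the maximum to fail to be strictly positive. So the only real content is choosing the scalar $c \neq 0$ with sign $-\sgn(a_{kk\ldots k})$ (any nonzero $c$ if $a_{kk\ldots k} = 0$), giving $x_k(\mathbb{A}x^{m-1})_k = c^{m} a_{kk\ldots k} = c\cdot c^{m-1} a_{kk\ldots k}$; since $m-1$ is even, $c^{m-1} > 0$, so the sign of this expression is $\sgn(c)\cdot\sgn(a_{kk\ldots k}) \le 0$ by our choice. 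Thus $\max_{i\in[n]} x_i(\mathbb{A}x^{m-1})_i \le 0$ for this nonzero $x$, contradicting Definition \ref{defn11}(b), and therefore no odd order $P$ tensor exists.
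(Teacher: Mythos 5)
Your final argument is correct and is essentially the paper's own proof: test the $P$ condition on a scaled standard basis vector $c\,e_k$, observe that $x_k(\mathbb{A}x^{m-1})_k = a_{kk\ldots k}c^m$ with all other terms zero, and use the oddness of $m$ to choose the sign of $c$ so the maximum is $\le 0$. The opening detour via the $x \mapsto -x$ symmetry is a dead end you correctly abandon, but the argument you settle on matches the paper's.
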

\begin{proof}
Let $m$ be odd  and $\mathbb{A}=(a_{i_1i_2\ldots i_m})\in T_{m, n}$ be an odd order $P$ tensor.
 For some given $k\in[n]$, we take nonzero vector $x=(0, \ldots, 0, u, 0, \ldots, 0)=(x_1, \ldots, x_{k-1}, x_k, x_{k+1}, \ldots, x_n)$,
where $x_k=u (\not=0)$ and $x_i=0$ for $i\ne k$. Since for any $i\in [n]$, by (\ref{eq1}), we have

$$(\mathbb{A}x^{m-1})_i=\sum\limits_{i_2, \ldots, i_m=1}^na_{ii_2\ldots i_m}x_{i_2}\ldots x_{i_m}=a_{ik\ldots k}u^{m-1},$$

\noindent then $$x_i(\mathbb{A}x^{m-1})_i=\left\{
   \begin{array}{cc}
   0, & \mbox {if }  i\ne k; \\
    a_{k k \ldots k}u^{m},  & \mbox {if } i=k.
   \end{array} \right.$$
\noindent

Since $\mathbb{A}$ is a $P$ tensor,  we have $\max\limits_{i\in [n]}x_i(\mathbb{A}x^{m-1})_i=a_{k k \ldots k}u^{m}>0$,
then $a_{k k \ldots k}\not=0$.
 Noting that $m$ is odd, thus $a_{kk \ldots k}>0$ when  we take $u>0$  and  $a_{kk \ldots k}<0$ when  we take $u<0$ by
 $a_{k k \ldots k}u^{m}>0$, it is a contradiction. Thus there does not exist an odd order  $P$ tensor.
\end{proof}

In \cite{SQ1},  the authors proved that there does not exist an odd order symmetric $P $ tensor by the theory of eigenvalues of tensors,
now we can obtain the result by proposition \ref{prop21} directly.
\begin{corollary}\label{cor22}
{\rm (1) (\cite{SQ1}, (3) of Theorem 1) } There does not exist an odd order symmetric $P $ tensor.

{\rm (2) } There does not exist an odd order non-symmetric $P$ tensor.
\end{corollary}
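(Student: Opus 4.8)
The plan is to obtain both statements as immediate specializations of Proposition~\ref{prop21}, which already asserts that \emph{no} odd order $P$ tensor exists in $T_{m,n}$ at all. Since $S_{m,n}\subseteq T_{m,n}$, and since ``non-symmetric $P$ tensor in $T_{m,n}$'' is also a special case of ``$P$ tensor in $T_{m,n}$'', there is essentially nothing left to do beyond recording the two cases of interest.

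For part (1), I would argue by contradiction: if some $\mathbb{A}\in S_{m,n}$ with $m$ odd were a $P$ tensor, then viewing $\mathbb{A}$ as an element of $T_{m,n}$ it would be an odd order $P$ tensor, contradicting Proposition~\ref{prop21}. This reproves (3) of Theorem~\ref{thm13}, which Song and Qi established through the theory of $Z$-eigenvalues; the present route bypasses all eigenvalue machinery. For part (2), the same one-line contradiction applies: an odd order non-symmetric $P$ tensor is in particular an odd order $P$ tensor in $T_{m,n}$, which Proposition~\ref{prop21} forbids. Together with part (1), this gives a complete negative answer to the first half of Question~\ref{ques14}.

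I do not expect any genuine obstacle here. All of the substantive content has already been discharged inside the proof of Proposition~\ref{prop21}, namely: for a fixed index $k$, plug in the vector supported only on its $k$th coordinate (value $u\neq0$), compute that $x_i(\mathbb{A}x^{m-1})_i$ vanishes for $i\neq k$ and equals $a_{k\ldots k}u^m$ for $i=k$, and then observe that requiring $\max_{i\in[n]} x_i(\mathbb{A}x^{m-1})_i = a_{k\ldots k}u^m>0$ for both signs of $u$ is impossible when $m$ is odd. The corollary simply restricts this conclusion to the symmetric and to the non-symmetric sub-families, so the write-up will be only a couple of sentences.
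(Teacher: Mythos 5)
Your proposal is correct and is exactly the paper's route: the corollary is stated as an immediate consequence of Proposition~\ref{prop21}, with both the symmetric and non-symmetric cases being trivial specializations. Nothing further is needed.
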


\begin{proposition}\label{prop23}
There exist odd order nonzero non-symmetric $P_0$ tensors.
 \end{proposition}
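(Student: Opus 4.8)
The plan is to write down an explicit witness. Fix an odd integer $m\geq 3$ and an integer $n\geq 2$, and define $\mathbb{A}=(a_{i_1i_2\ldots i_m})\in T_{m,n}$ by $a_{n1\ldots 1}=1$, i.e. the entry whose first index is $n$ and whose remaining $m-1$ indices are all equal to $1$, and $a_{i_1i_2\ldots i_m}=0$ for every other index tuple. First I would record the two easy properties: $\mathbb{A}$ is clearly nonzero, and it is not symmetric, since $(n,1,\ldots,1)$ and $(1,\ldots,1,n)$ are permutations of one another while $a_{n1\ldots 1}=1\neq 0=a_{1\ldots 1n}$ (here $n\geq 2$ is used so that these are genuinely distinct tuples).

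The remaining task is to check the $P_0$ condition of Definition \ref{defn11}(a). Let $x=(x_1,\ldots,x_n)\in\mathfrak{R}^n$ be an arbitrary nonzero vector; I would distinguish two cases according to whether $x$ is supported only on coordinate $n$. If $x_j\neq 0$ for some $j\in[n]$ with $j\neq n$, then, since no nonzero entry of $\mathbb{A}$ has first index $j$, formula (\ref{eq1}) gives $(\mathbb{A}x^{m-1})_j=0$, hence $x_j(\mathbb{A}x^{m-1})_j=0\geq 0$ and we may take $i=j$. Otherwise $x_1=\cdots=x_{n-1}=0$, so $x_n\neq 0$, and (\ref{eq1}) gives $(\mathbb{A}x^{m-1})_n=a_{n1\ldots 1}x_1^{m-1}=x_1^{m-1}=0$, hence $x_n(\mathbb{A}x^{m-1})_n=0\geq 0$ and we may take $i=n$. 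Since these two cases exhaust all nonzero $x$, $\mathbb{A}$ is a $P_0$ tensor, and the proposition follows.

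There is really no hard step here: the statement is purely an existence assertion, and the whole content lies in picking a tensor with a single nonzero entry positioned so that, at every nonzero coordinate $x_i$ of any nonzero $x$, the value $(\mathbb{A}x^{m-1})_i$ can be forced to be $0$. The only point requiring a little attention is to set up the case split so that it is exhaustive and so that the index $i$ returned in each case genuinely has $x_i\neq 0$; the placement of the nonzero entry at $(n,1,\ldots,1)$ is chosen precisely to make both of these automatic. Together with Proposition \ref{prop21}, this answers Question \ref{ques14} completely: odd order $P$ tensors never exist, whereas odd order nonzero non-symmetric $P_0$ tensors exist for every odd $m\geq 3$ and every $n\geq 2$.
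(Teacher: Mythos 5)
Your construction is correct: the single nonzero entry $a_{n1\ldots 1}=1$ makes $(\mathbb{A}x^{m-1})_j=0$ for every $j\neq n$ and $(\mathbb{A}x^{m-1})_n=x_1^{m-1}$, so in your first case the chosen coordinate $j$ has $x_j\neq 0$ and product exactly $0$, and in the second case $x_1=0$ forces the product at $i=n$ to be $0$ as well; the two cases are exhaustive, and the nonzero/non-symmetric checks are right (with $n\geq 2$ needed exactly as you say). This is a genuinely different, and noticeably leaner, witness than the paper's. The paper builds its example from pairs of coupled entries ($1$ and $-2$, plus a triple $1,-2,4$ when $n$ is odd) arranged so that the diagonal products $x_i(\mathbb{A}x^{m-1})_i$ occur in pairs of opposite sign, which forces a case split on the parity of $n$; your example avoids any parity discussion and works uniformly for all $n\geq 2$ because every diagonal product can be forced to vanish at some coordinate where $x$ is nonzero. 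The trade-off is that your tensor is ``degenerate'' in the sense that the $P_0$ inequality is always witnessed by equality, whereas the paper's example exhibits coordinates where the product is strictly positive; for the bare existence statement of Proposition \ref{prop23} this costs nothing, and your example shares with the paper's (cf.\ Remark \ref{rem24}) the feature that oddness of $m$ is never used, so it also yields an even order nonzero non-symmetric $P_0$ tensor.
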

\begin{proof}
Let $m$ be odd. Now we show the result by the following two cases.

\noindent {\bf Case 1: }  $n$ is even.

Let $n=2k\ge 2$. Take $a_{i(k+i)i\ldots i}=1,$ $ a_{(k+i)i\ldots i}=-2$ for any $i\in [k]$ and
$a_{i_1\ldots i_m}=0$ for others $(i_1, \ldots, i_m)\in[n]^{m}$. Clearly, $\mathbb{A}=(a_{i_1\ldots i_m})$ is an odd order nonzero non-symmetric tensor. Now we show that $\mathbb{A}$ is a $P_0$ tensor.

For any nonzero vector $x=(x_1,x_2, \ldots, x_{2k})^T\in \mathfrak{R}^{2k}$,   noting that
$$x_i(\mathbb{A}x^{m-1})_i=x_i^{m-1}x_{k+i}, \quad  x_{k+i}(\mathbb{A}x^{m-1})_{k+i}=-2x_i^{m-1}x_{k+i} \mbox { for any } i=1, \ldots, k, $$

\noindent then by $x\not=0$, there exists some $j\in [k]$ such that $x_j\not=0$ or $x_{k+j}\not=0$,
and

  $$x_j(\mathbb{A}x^{m-1})_j\geq 0 \mbox { or } x_{k+j}(\mathbb{A}x^{m-1})_{k+j}\geq 0.$$

\noindent  Thus  $\mathbb{A}$ is a $P_0$ tensor by the definition of $P_0$ tensor.

\noindent {\bf Case 2: }  $n$ is odd.

Let $n=2k+1\ge 3$.  Take $a_{i(k+i)i\ldots i}=1, a_{(k+i)ii\ldots i}=-2$ for $i\in [k-1]$, $a_{k(2k)(2k+1)k\ldots k}=1, a_{(2k)(2k+1)k\ldots k}=-2, a_{(2k+1)(2k)k\ldots k}=4$,  and
$a_{i_1\ldots i_m}=0$ for others $(i_1, \ldots, i_m)\in[n]^{m}$. Clearly, $\mathbb{A}=(a_{i_1\ldots i_m})$ is an odd order nonzero non-symmetric tensor. Now we show that $\mathbb{A}$ is a $P_0$ tensor.

For any nonzero vector $x=(x_1,x_2, \ldots, x_{2k+1})^T\in \mathfrak{R}^{2k+1}$,  noting that
$$x_i(\mathbb{A}x^{m-1})_i=x_i^{m-1}x_{k+i}, \quad x_{k+i}(\mathbb{A}x^{m-1})_{k+i}=-2x_i^{m-1}x_{k+i} \mbox { for any }  i=1, \ldots, k-1, $$

\noindent and
$x_k(\mathbb{A}x^{m-1})_k=x_k^{m-2}x_{2k}x_{2k+1}, \hskip.2cm  x_{2k}(\mathbb{A}x^{m-1})_{2k}=-2x_k^{m-2}x_{2k}x_{2k+1},
 \hskip.2cm x_{2k+1}(\mathbb{A}x^{m-1})_{2k+1}=4x_k^{m-2}x_{2k}x_{2k+1}, $
then by $x\not=0$, we can complete the proof by the following two cases.

\noindent {\bf Subcase 2.1: } there exists some $j\in [k-1]$ such that $x_j\not=0$ or $x_{k+j}\not=0$.

 It is obvious that $\mathbb{A}$ is a $P_0$ tensor by $x_j(\mathbb{A}x^{m-1})_j\geq 0 \mbox { or } x_{k+j}(\mathbb{A}x^{m-1})_{k+j}\geq 0$ and
 the definition of $P_0$ tensor.

\noindent {\bf Subcase 2.2: }  $x_k\not=0$ or $x_{2k}\not=0$ or $x_{2k+1}\not=0$.

 It is obvious that $\mathbb{A}$ is a $P_0$ tensor by the definition of $P_0$ tensor and the facts that $x_k(\mathbb{A}x^{m-1})_k\geq 0 \mbox { or } x_{2k}(\mathbb{A}x^{m-1})_{2k}\geq 0 $  or $x_{2k+1}(\mathbb{A}x^{m-1})_{2k+1}\geq 0$.

Combining the above arguments, $\mathbb{A}$ is an odd order nonzero non-symmetric $P_0$ tensor.
\end{proof}

\begin{remark}\label{rem24}
Noting that whether ``$m$ is odd" or not does not change the fact that   $\mathbb{A}$ is a nonzero non-symmetric $P_0$ tensor.
Thus the proof of Proposition \ref{prop23} also give an example of an even order nonzero non-symmetric $P_0$ tensor.
\end{remark}

Clearly, Propositions \ref{prop21} and \ref{prop23} answer Question \ref{ques14} (Question 1 in \cite{SQ1}).

\section{The relationship between $B(B_0)$ and $P(P_0)$ tensors}

\hskip.6cm In this section, we answer Questions \ref{ques15} and  \ref{ques16},  obtain some further results about $B (B_0)$ tensors and $P (P_0)$ tensors.

\begin{proposition}\label{prop31}
When $m \geq 4$ and is even, in general, a  $B (B_0)$ tensor is not a $P (P_0)$ tensor.
 \end{proposition}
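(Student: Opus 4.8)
The plan is to produce, for each even $m\ge 4$, an explicit \emph{non-symmetric} $B$ tensor $\mathbb{A}\in T_{m,3}$ together with one fixed nonzero $v\in\mathfrak{R}^{3}$ for which $v_i(\mathbb{A}v^{m-1})_i<0$ for every $i\in[3]$. Such an $\mathbb{A}$ is automatically a $B_0$ tensor, and this single $v$ shows simultaneously that $\mathbb{A}$ is neither a $P$ tensor (since $\max_{i}v_i(\mathbb{A}v^{m-1})_i<0$) nor a $P_0$ tensor (no coordinate of $v$ vanishes, yet every product is negative), which is exactly the claim. The example must be non-symmetric (an even order symmetric $B$ tensor is positive definite, hence a $P$ tensor, as recalled above), and $n=2$ cannot work (a short computation with $\mathbb{A}(1,t)^{m-1}$ and the binomial theorem shows every even order $B$ tensor in $T_{m,2}$ is a $P$ tensor), so $n=3$ is the first dimension to try.

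First I would fix $a\in(\tfrac{1}{2},1)$, say $a=\tfrac{3}{4}$, and put $v=(1,-a,-a)^{T}$, so $v_1+v_2+v_3=1-2a<0$ while $v_1$ is strictly largest in absolute value. For a row index $i$, I group the $3^{m-1}$ entries $a_{ij_2\ldots j_m}$ by the number $k$ of indices among $j_2,\ldots,j_m$ lying in $\{2,3\}$; then $v_{j_2}\cdots v_{j_m}=(-a)^{k}$, group $k$ has $\binom{m-1}{k}2^{k}$ entries, and $\sum_{k}\binom{m-1}{k}2^{k}=3^{m-1}$ is exactly the denominator in the $B$ condition. The decisive fact is the identity $\sum_{k=0}^{m-1}\binom{m-1}{k}2^{k}\bigl(1-(-a)^{k}\bigr)=3^{m-1}-(1-2a)^{m-1}$, whose right-hand side \emph{exceeds} $3^{m-1}$ precisely when $1-2a<0$ and $m-1$ is odd, i.e.\ when $a>\tfrac{1}{2}$ and $m$ is even. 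For row $1$ — whose diagonal $a_{11\ldots1}$ sits in group $k=0$ — I would push every off-diagonal entry up to $\tfrac{r}{3^{m-1}}-\varepsilon$ (with $r>0$ fixed and $\varepsilon>0$ small) and set $a_{11\ldots1}=r-(3^{m-1}-1)\bigl(\tfrac{r}{3^{m-1}}-\varepsilon\bigr)$; then the row sum is $r$, all $B$ inequalities are strict, and the identity above (equivalently, a direct computation using $\sum_{(j_2,\ldots,j_m)\ne(1,\ldots,1)}v_{j_2}\cdots v_{j_m}=(1-2a)^{m-1}-1<0$) forces $(\mathbb{A}v^{m-1})_1<0$ once $\varepsilon$ is small, hence $v_1(\mathbb{A}v^{m-1})_1<0$.

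Rows $2$ and $3$ are the delicate point and, I expect, the main obstacle. There the diagonal entries $a_{22\ldots2},a_{33\ldots3}$ fall in group $k=m-1$ and contribute $-a^{m-1}a_{jj\ldots j}<0$ to $(\mathbb{A}v^{m-1})_j$, whereas — since $v_2,v_3<0$ — one now needs $(\mathbb{A}v^{m-1})_2,(\mathbb{A}v^{m-1})_3$ to be \emph{positive}. The fix is to load these rows with a large negative weight in group $k=1$: for row $2$, set each of the $2(m-1)$ entries $a_{2j_2\ldots j_m}$ having exactly one index in $\{2,3\}$ equal to $-\tfrac{Z}{2(m-1)}$ for a large $Z>0$, all remaining off-diagonal entries $0$, and $a_{22\ldots2}=r+Z$; the row sum is again $r$ and all off-diagonal entries are $\le 0<\tfrac{r}{3^{m-1}}$, so this is still a legitimate $B$ row. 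A short calculation then gives $(\mathbb{A}v^{m-1})_2=\tfrac{3}{4}Z\bigl(1-(3/4)^{m-2}\bigr)-r\,(3/4)^{m-1}$, which is positive once $Z$ is large — because $(3/4)^{m-2}<1$, i.e.\ because $m\ge 3$ — so $v_2(\mathbb{A}v^{m-1})_2<0$, and row $3$ is handled by the mirror construction. The two constraints that emerge, "$m$ even" (row $1$) and "$m\ge 3$" (rows $2$ and $3$), together force $m\ge 4$ even, in agreement with the classical fact that for $m=2$ every $B$ matrix is a $P$ matrix.

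Finally I would assemble $\mathbb{A}$ from these three rows, observe that it is a $B$ tensor (all row sums equal $r>0$, all off-diagonal entries strictly below $\tfrac{r}{3^{m-1}}$) and visibly non-symmetric, and conclude from $v\ne 0$ with $v_i(\mathbb{A}v^{m-1})_i<0$ for all $i$ that $\mathbb{A}$ is neither a $P$ tensor nor a $P_0$ tensor; since $\mathbb{A}$ is also a $B_0$ tensor, the same example settles the $B_0$/$P_0$ assertion, which proves the proposition.
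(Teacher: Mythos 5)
Your proposal is correct and follows essentially the same strategy as the paper: exhibit an explicit even-order non-symmetric $B$ tensor together with a single test vector $x$ (with all coordinates nonzero) for which every product $x_i(\mathbb{A}x^{m-1})_i$ is strictly negative, using the positively-loaded first row against $(\sum_i x_i)^{m-1}$ with a sign mismatch and negatively-loaded remaining rows. The paper's example is somewhat leaner --- it takes $a_{1i_2\ldots i_m}=h$ for all off-diagonal entries of row $1$, a single negative entry $a_{i11i\ldots i}=b$ in each other row, and the vector $(-3,2,\ldots,2)$ --- and works uniformly for all $n\ge 3$, but your $n=3$ construction with $v=(1,-\tfrac34,-\tfrac34)$ is verified to satisfy all the $B$ inequalities and the required sign conditions, so it proves the proposition equally well.
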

\begin{proof}
Let $m(\ge4)$ be even and $n(\ge 3)$ be a positive integer, $\mathbb{A}=(a_{i_1\ldots i_m})$ where $a_{11\ldots1}=h+1>1, a_{1i_2\ldots i_m}=h>0,$
 $a_{ii\ldots i}=1$ and   $a_{i11i\ldots i}=b$ for any $i\in\{2, \ldots, n\}$,
 and $a_{i_1i_2\ldots i_m}=0$ for others.

 It is easy to check that  $\mathbb{A}$ is an even order non-symmetric $B$ tensor when $b=-\frac{1}{2}$ and
 $\mathbb{A}$ is an even order non-symmetric $B_0$ tensor when $b=-1$   by the definitions of $B (B_0) $ tensor.

  Now for any nonzero vector $x=(x_1, \ldots, x_n)^T\in \mathfrak{R}^n$, we have
$$x_i\left(\mathbb{A}x^{m-1}\right)_i=\left\{
   \begin{array}{ll}
  x_1^m+hx_1(x_1+\ldots+ x_n)^{m-1}, & \mbox { if } i=1;  \\
   x^m_i+bx_1^2x_i^{m-2},   & \mbox { if } i\in \{2, \ldots, n\}.
   \end{array} \right.$$
It is easy to see that there exist real numbers $h, x_1, \ldots, x_n$ with $x_i\left(\mathbb{A}x^{m-1}\right)_i<0$ for all $i\in[n]$ and $x_i\not=0$.
For example,   we take $x_1=-3, x_2=\ldots =x_n=2$, $h>(\frac{3}{2n-5})^{m-1}$,
we have $x_i\left(\mathbb{A}x^{m-1}\right)_i<0$ for all $i\in [n]$ and $x_i\not=0$.
Then $\mathbb{A}$ is not a $P (P_0)$ tensor by the definitions of $P (P_0)$ tensor.
\end{proof}

\begin{remark}\label{rem32}
Clearly, Proposition \ref{prop31}  answers Questions \ref{ques15} and \ref{ques16} (Question 4 in \cite{SQ1} and Question 2 in \cite{SQ2}).
Furthermore, by Proposition \ref{prop31} and some known results, we know that a $B (B_0)$ tensor is not a $P (P_0)$ tensor in general.
That is to say, the cases of $m\geq 3$ (hypermatrices) is different from the case of $m=2$ (matrices) completely.
\end{remark}

Let $\mathbb{A} \in T_{m,n}$. If all of the off-diagonal entries of $\mathbb{A}$ are non-positive, i.e., $a_{i_1\ldots i_m}\leq 0$ when $(i_1, \ldots, i_m)\not=(i, \ldots, i)$, then $\mathbb{A}$ is called a $Z$ tensor (\cite{ZQZ14}).  In the following, we will show if an even order  $Z$ tensor $\mathbb{A}$ is a $B (B_0)$ tensor, then  $\mathbb{A}$ is a $P (P_0)$ tensor.

We call $\mathbb{A}$ is  diagonally dominated if for all $i\in [n]$,
$$a_{i\ldots i}\geq \sum\{|a_{ii_2\ldots i_m}|: (i_2,\ldots, i_m)\not=(i, \ldots, i), i_j\in [n], j=2, \ldots, m\};$$
\noindent and we call $\mathbb{A}$ is strictly diagonally dominated if for all $i\in [n]$,
$$a_{i\ldots i}> \sum\{|a_{ii_2\ldots i_m}|: (i_2,\ldots, i_m)\not=(i, \ldots, i), i_j\in [n], j=2, \ldots, m\}.$$

It was proved in \cite{ZQZ14} that a diagonally dominated $Z$ tensor is an $M$ tensor, and a
strictly diagonally dominated $Z$ tensor is a strong $M$ tensor (\cite{DQW13,ZQZ14}), where strong $M$ tensors are called nonsingular tensors (\cite{DQW13}),
and Laplacian tensors of uniform hypergraphs which is defined as a natural extension of Laplacian matrices of graphs are M tensors.

In \cite{SQ1}, the authors give the properties of a $B (B_0)$ tensor under the condition that it is a $Z$ tensor as follows.

\begin{theorem}{\rm (\cite{SQ1}, Theorem 8)}\label{thm33}
 Let $\mathbb{A} = (a_{i_1i_2\ldots i_m})\in T_{m,n}$  be a $Z$ tensor. Then the following properties are equivalent:

{\rm (i) } $\mathbb{A}$  is $B(B_0)$ tensor;

{\rm (ii) } for each $i\in [n]$, $\sum\limits_{i_2,\ldots, i_m=1}^{n}a_{ii_2\ldots i_m}$ is positive (nonnegative);

{\rm (iii) } $\mathbb{A}$ is strictly diagonally dominant (diagonally dominated).
\end{theorem}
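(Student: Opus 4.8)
The plan is to prove the chain of equivalences (i) $\Rightarrow$ (ii) $\Rightarrow$ (iii) $\Rightarrow$ (i). The first implication is essentially immediate: by the definition of a $B$ ($B_0$) tensor (Definition \ref{defn12}), the very first condition imposed is precisely that $\sum_{i_2,\ldots,i_m=1}^n a_{ii_2\ldots i_m}$ be positive (nonnegative) for each $i\in[n]$, so there is nothing to do. Note that this step does not even use the hypothesis that $\mathbb{A}$ is a $Z$ tensor.

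The heart of the argument is (ii) $\Rightarrow$ (iii), and this is where the $Z$-tensor hypothesis enters. Fix $i\in[n]$ and write $r_i=\sum_{i_2,\ldots,i_m=1}^n a_{ii_2\ldots i_m}$. Since $\mathbb{A}$ is a $Z$ tensor, every off-diagonal entry $a_{ii_2\ldots i_m}$ with $(i_2,\ldots,i_m)\neq(i,\ldots,i)$ satisfies $a_{ii_2\ldots i_m}\leq 0$, hence $|a_{ii_2\ldots i_m}|=-a_{ii_2\ldots i_m}$. Therefore
\[
\sum_{(i_2,\ldots,i_m)\neq(i,\ldots,i)}|a_{ii_2\ldots i_m}|
=-\sum_{(i_2,\ldots,i_m)\neq(i,\ldots,i)}a_{ii_2\ldots i_m}
=a_{i\ldots i}-r_i.
\]
Thus the strict diagonal dominance condition $a_{i\ldots i}>\sum_{(i_2,\ldots,i_m)\neq(i,\ldots,i)}|a_{ii_2\ldots i_m}|$ is equivalent to $a_{i\ldots i}>a_{i\ldots i}-r_i$, i.e. to $r_i>0$; similarly diagonal dominance is equivalent to $r_i\geq 0$. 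Since (ii) says exactly that $r_i>0$ (resp. $r_i\geq0$) for all $i$, we get (iii), and in fact this computation shows (ii) and (iii) are equivalent directly.

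For (iii) $\Rightarrow$ (i), one route is simply to reverse the computation above: strict diagonal dominance gives $r_i>0$, and then one must check the remaining inequality in Definition \ref{defn12}, namely $\frac{1}{n^{m-1}}r_i>a_{ij_2\ldots j_m}$ for all $(j_2,\ldots,j_m)\neq(i,\ldots,i)$. But here the $Z$-tensor hypothesis makes this trivial: the right-hand side $a_{ij_2\ldots j_m}$ is an off-diagonal entry, hence $\leq 0$, while the left-hand side $\frac{1}{n^{m-1}}r_i>0$ (resp. $\geq 0$), so the inequality holds automatically. Combined with $r_i>0$ (resp. $r_i\geq 0$) this is precisely the definition of a $B$ (resp. $B_0$) tensor. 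I would organize the writeup as (i) $\Rightarrow$ (ii) is definitional, (ii) $\Leftrightarrow$ (iii) via the sign computation, and (iii) $\Rightarrow$ (i) via the same computation plus the trivial observation that off-diagonal entries are nonpositive. The only mild subtlety — and the one place to be careful — is keeping the strict versus non-strict versions aligned throughout (the $B$ case uses strict inequalities and strict diagonal dominance, the $B_0$ case uses the non-strict analogues), but there is no real obstacle: the whole proposition reduces to the elementary identity $\sum_{(i_2,\ldots,i_m)\neq(i,\ldots,i)}|a_{ii_2\ldots i_m}|=a_{i\ldots i}-r_i$ valid for $Z$ tensors.
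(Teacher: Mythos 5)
Your proof is correct; note that the paper itself states this result as a quotation of Theorem 8 of \cite{SQ1} and gives no proof, so there is nothing to compare against. Your argument --- (i) $\Rightarrow$ (ii) is definitional, the identity $\sum_{(i_2,\ldots,i_m)\neq(i,\ldots,i)}|a_{ii_2\ldots i_m}|=a_{i\ldots i}-r_i$ for $Z$ tensors gives (ii) $\Leftrightarrow$ (iii), and the second $B$-condition is automatic since off-diagonal entries are nonpositive --- is exactly the standard one, and the strict/non-strict cases are handled consistently.
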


\begin{theorem} \label{thm34}
Let $m$ be even, $\mathbb{A} \in T_{m,n}$ be a strictly diagonally dominated (diagonally dominated)  tensor,
then $\mathbb{A}$ is a $P (P_0)$ tensor.
\end{theorem}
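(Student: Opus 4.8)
The plan is to argue directly from Definition \ref{defn11}, reducing everything to a single well-chosen coordinate. Given a nonzero vector $x=(x_1,\ldots,x_n)^T\in\mathfrak{R}^n$, I would pick an index $k\in[n]$ with $|x_k|=\max_{i\in[n]}|x_i|$; since $x\ne 0$, this $x_k\ne 0$. The claim to establish is that this coordinate $k$ already witnesses the required sign condition: $x_k(\mathbb{A}x^{m-1})_k>0$ in the strictly diagonally dominated case, and $x_k(\mathbb{A}x^{m-1})_k\ge 0$ in the diagonally dominated case.

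To do this, I would split off the diagonal term in (\ref{eq1}):
$$x_k(\mathbb{A}x^{m-1})_k=a_{k\ldots k}\,x_k^{m}+\sum_{(i_2,\ldots,i_m)\ne(k,\ldots,k)}a_{ki_2\ldots i_m}\,x_k x_{i_2}\cdots x_{i_m}.$$
Here I use that $m$ is even, so $x_k^m=|x_k|^m>0$, and that diagonal dominance forces $a_{k\ldots k}\ge \sum\{|a_{ki_2\ldots i_m}|:(i_2,\ldots,i_m)\ne(k,\ldots,k)\}\ge 0$, with strict inequality in the strict case. For each off-diagonal multi-index, $|x_k x_{i_2}\cdots x_{i_m}|\le |x_k|^m$ by maximality of $|x_k|$, so the sum above is bounded below by $-\big(\sum\{|a_{ki_2\ldots i_m}|:(i_2,\ldots,i_m)\ne(k,\ldots,k)\}\big)|x_k|^m$. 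Combining the two estimates,
$$x_k(\mathbb{A}x^{m-1})_k\ge |x_k|^m\Big(a_{k\ldots k}-\sum\{|a_{ki_2\ldots i_m}|:(i_2,\ldots,i_m)\ne(k,\ldots,k)\}\Big),$$
which is $>0$ when $\mathbb{A}$ is strictly diagonally dominated and $\ge 0$ when $\mathbb{A}$ is diagonally dominated. In the first case $\max_{i\in[n]}x_i(\mathbb{A}x^{m-1})_i>0$, so $\mathbb{A}$ is a $P$ tensor; in the second case $k$ is an index with $x_k\ne 0$ and $x_k(\mathbb{A}x^{m-1})_k\ge 0$, so $\mathbb{A}$ is a $P_0$ tensor.

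There is no serious obstacle here; the only point requiring care is the elementary estimate $|x_k x_{i_2}\cdots x_{i_m}|\le|x_k|^m$ together with the role of the parity of $m$ — it is precisely the evenness of $m$ that makes the diagonal contribution $a_{k\ldots k}x_k^m$ positive rather than sign-indeterminate, which is why the analogous statement fails for odd $m$ (cf. the identity tensor discussed before Question \ref{ques15}). Finally, I would record the immediate consequence, via Theorem \ref{thm33}: an even order $Z$ tensor that is a $B$ ($B_0$) tensor is strictly diagonally dominant (diagonally dominated), hence a $P$ ($P_0$) tensor.
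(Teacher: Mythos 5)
Your proposal is correct and follows essentially the same route as the paper: choose the index $k$ maximizing $|x_k|$, split off the diagonal term $a_{k\ldots k}|x_k|^m$ (using that $m$ is even), and bound the off-diagonal sum below by $-\bigl(\sum |a_{ki_2\ldots i_m}|\bigr)|x_k|^m$ via $|x_kx_{i_2}\cdots x_{i_m}|\le |x_k|^m$. The paper's proof does exactly this with $k=1$ taken without loss of generality, so there is nothing further to add.
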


\begin{proof}
Let  $\mathbb{A} \in T_{m,n}$ be a strictly diagonally dominated   tensor.
Then for any nonzero vector $0\ne x\in\mathfrak{R}^n$, without loss of generality,
 we assume that $|x_1|=\max\limits_{i\in[n]}\{|x_i|\}$, then $|x_1|\ne0$.
 By the definition of strictly diagonally dominated and $m$ is even, we have
$$x_1\left(\mathbb{A}x^{m-1}\right)_1=a_{1\ldots1}|x_1|^m+
\sum\limits_{i_2, \ldots, i_m=1,(i_2,\ldots, i_m)\not=(1,\ldots,1)}^na_{1i_2\ldots i_m}x_1x_{i_2}\ldots x_{i_m}$$

\hskip3.8cm $\geq\left(a_{1\ldots1}-\sum\limits_{i_2, \ldots, i_m=1, (i_2,\ldots, i_m)\not=(1,\ldots,1)}^n|a_{1i_2\ldots i_m}|\right)|x_1|^m$

\hskip3.8cm $>0.$

\noindent Thus $\mathbb{A}$ is a $P$ tensor by the definition of $P$ tensor.

If $\mathbb{A} \in T_{m,n}$ is a  diagonally dominated  tensor, the proof is similar, we omit it.
\end{proof}

By Theorem \ref{thm34} and the result (2) of Theorem \ref{thm13}, we obtain the following result immediately.
\begin{corollary}{\rm (\cite{SQ2}, Theorem 3)}\label{cor35}
Let $m$ be even and $\mathbb{A}\in S_{m,n}$. If $\mathbb{A}$ is diagonally dominated,
then $\mathbb{A}$ is positive semi-definite. If $\mathbb{A}$ is strictly diagonally dominated,
then $\mathbb{A}$ is positive definite.
\end{corollary}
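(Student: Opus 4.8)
The plan is to obtain this statement as an immediate consequence of Theorem~\ref{thm34} together with part~(2) of Theorem~\ref{thm13}. The point is that the two hypotheses appearing here, ``diagonally dominated'' and ``strictly diagonally dominated'', are exactly the hypotheses of Theorem~\ref{thm34}, whose conclusions are precisely that $\mathbb{A}$ is a $P_0$ tensor, respectively a $P$ tensor. Since we are given $\mathbb{A}\in S_{m,n}$ and $m$ is even, part~(2) of Theorem~\ref{thm13} applies and tells us that a symmetric tensor is a $P$ (respectively $P_0$) tensor if and only if it is positive definite (respectively positive semi-definite). So the whole argument is just a concatenation of these two facts.

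Concretely, I would first treat the strictly diagonally dominated case: Theorem~\ref{thm34} gives that $\mathbb{A}$ is a $P$ tensor, and then, since $\mathbb{A}\in S_{m,n}$, the result (2) of Theorem~\ref{thm13} converts this into positive definiteness of $\mathbb{A}$. Next, for a symmetric diagonally dominated tensor, Theorem~\ref{thm34} gives that $\mathbb{A}$ is a $P_0$ tensor, and again result (2) of Theorem~\ref{thm13} turns this into positive semi-definiteness of $\mathbb{A}$. This finishes both assertions, and the write-up should be only two or three lines.

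There is essentially no obstacle: the one place deserving a moment's attention is that the equivalence in Theorem~\ref{thm13}(2) is being used in the direction $P\Rightarrow$ positive definite and $P_0\Rightarrow$ positive semi-definite, and that the assumption ``$m$ even'' is what makes positive semi-definiteness non-vacuous and the cited equivalence applicable. If one preferred a self-contained proof avoiding the $P$/$P_0$ language, one could instead run a Ger\v{s}gorin-type estimate directly: for even $m$ a symmetric tensor is positive (semi-)definite iff its smallest $H$-eigenvalue is positive (nonnegative), and diagonal dominance forces that eigenvalue to be $\geq 0$ (or $>0$ in the strict case) by the standard localization of $H$-eigenvalues in diagonal Ger\v{s}gorin-type discs; this parallels the coordinate-of-maximal-modulus computation already used in the proof of Theorem~\ref{thm34}. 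I would, however, present the short route via Theorem~\ref{thm34} and Theorem~\ref{thm13}(2), since the corollary is advertised as following ``immediately''.
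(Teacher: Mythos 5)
Your proposal is correct and follows exactly the same route as the paper, which states the corollary as an immediate consequence of Theorem~\ref{thm34} (strictly diagonally dominated implies $P$, diagonally dominated implies $P_0$, for even $m$) combined with result (2) of Theorem~\ref{thm13} (for symmetric tensors, $P$ is equivalent to positive definite and $P_0$ to positive semi-definite). The alternative Ger\v{s}gorin-type argument you sketch is not needed and is not what the paper does.
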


By Theorems \ref{thm33} and \ref{thm34}, we obtain the following result directly.
\begin{theorem} \label{thm36}
Let $m$ be even, $\mathbb{A} \in T_{m,n}$ be a $Z$  tensor.
If $\mathbb{A}$ is a $B (B_0)$ tensor, then  $\mathbb{A}$ is a $P (P_0)$ tensor.
\end{theorem}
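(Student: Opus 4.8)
The plan is to obtain Theorem~\ref{thm36} as an immediate composition of Theorem~\ref{thm33} and Theorem~\ref{thm34}, so the proof amounts to chaining two already-established implications. Since $\mathbb{A}$ is assumed to be a $Z$ tensor, the equivalence (i)$\Leftrightarrow$(iii) of Theorem~\ref{thm33} is available: if $\mathbb{A}$ is a $B$ tensor, then $\mathbb{A}$ is strictly diagonally dominated; and if $\mathbb{A}$ is a $B_0$ tensor, then $\mathbb{A}$ is diagonally dominated. This is the only step where the hypothesis that $\mathbb{A}$ is a $Z$ tensor enters.

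Next I would invoke Theorem~\ref{thm34}, whose standing assumption that $m$ be even is precisely the remaining hypothesis of the present statement. It gives that an even order strictly diagonally dominated tensor is a $P$ tensor, and that an even order diagonally dominated tensor is a $P_0$ tensor. Composing the two chains of implications then yields: ($Z$ tensor and $B$ tensor) $\Rightarrow$ strictly diagonally dominated $\Rightarrow$ $P$ tensor; and, in parallel, ($Z$ tensor and $B_0$ tensor) $\Rightarrow$ diagonally dominated $\Rightarrow$ $P_0$ tensor. This is exactly the assertion to be proved.

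There is essentially no obstacle: the entire content is carried by the two cited results, one quoted from \cite{SQ1} and one proved just above, so nothing new has to be verified. The only point deserving a little care is the bookkeeping of the two parallel cases---keeping the $B$/strictly diagonally dominated/$P$ strand separate from the $B_0$/diagonally dominated/$P_0$ strand---and stating explicitly that both auxiliary theorems are already in force, so that the deduction is genuinely one line in each case.
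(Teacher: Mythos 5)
Your proposal is correct and matches the paper exactly: the paper also obtains this theorem directly by combining the equivalence (i)$\Leftrightarrow$(iii) of Theorem~\ref{thm33} (which uses the $Z$ tensor hypothesis) with Theorem~\ref{thm34} (which uses the even order hypothesis). Nothing further is needed.
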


\begin{remark}
Let $\mathbb{A} \in T_{m,n}$ be a $Z$  tensor. If $\mathbb{A}$  is a $B$ tensor, in general,  $\mathbb{A}$  is not positive definite.
For example, let $b>0, h>0$ and $\mathbb{A}=(a_{i_1\ldots i_m})$, where $a_{11\ldots 1}=b+1,$  $a_{12\ldots 2}=-b$,
$a_{ii\ldots i}=1$ for all $i\in \{2, \ldots, n\}$,  and $a_{i_1i_2\ldots i_m}=0$ for others.
Clearly, $\mathbb{A}$ is a strictly diagonally dominated  $Z$ tensor, namely,
$\mathbb{A}$ is both a   $Z$ tensor and a $B$ tensor by Theorem \ref{thm33}.
Then for any vector $x=(x_1,\ldots, x_n)^T\in \mathfrak{R}_n$, we have $\mathbb{A}x^m=\sum\limits_{i=1}^nx_i^m+bx_1^m-bx_1x_2^{m-1}$.
 Now we take $x_1=1, x_2=\ldots =x_n=h$,
thus $\mathbb{A}x^m=(b+1)+(n-1)h^m-bh^{m-1}$.  Clearly, there exist positive real numbers $b$ and $h$ such that $\mathbb{A}x^m<0$,
then $\mathbb{A}$  is not positive definite. But by Theorem \ref{thm36}, $\mathbb{A}$ is a $P$ tensor,
it implies that a $P$ tensor, in general, is not positive definite.
\end{remark}

\section{Decomposition of $B (B_0) $ tensors}
\hskip.6cm In \cite{SQ2}, the authors  show that a symmetric $B (B_0)$ tensor can always be decomposed to the sum of a strictly
diagonally dominated (diagonally dominated) symmetric $Z$ tensor and several positive multiples of partially
all one tensors. When the order is even, this result implies that a symmetric $B (B_0)$ tensor is
positive definite (positive semi-definite), namely, a symmetric $P (P_0)$ tensor.
In this section, we weaken the condition ``symmetric" and obtain the generalized results by using  some similar technique  in \cite{SQ2}.

This concept of ``a principal sub-tensor'' was first introduced and used in \cite{Qi05} for symmetric tensor as follows.

\begin{definition}{\rm(\cite{Qi05})}\label{defn41}
A tensor $\mathbb{C}\in T_{m,r}$ is called a principal sub-tensor of a tensor $\mathbb{A} = (a_{i_1\ldots i_m})\in T_{m,n}$ $(1 ¡Ü\leq r\leq n)$
 if there is a set $J$ that composed of $r$ elements in $[n]$ such that
$$\mathbb{C} =(a_{i_1\ldots i_m}), \mbox { for all } i_1, i_2, \ldots, i_m\in J.$$
\end{definition}

For convenient, we denote by $\mathbb{A}^J_r$ the principal sub-tensor of a tensor $\mathbb{A}\in T_{m,n}$
such that the entries of  $\mathbb{A}^J_r$ are indexed by $J\subseteq [n]$ with $|J| = r$ for $1\leq  r \leq  n$.
It is clear that when $r=1$, the principal sub-tensors are just the diagonal entries.

\begin{definition}{\rm(\cite{SQ2})}\label{defn42}
Suppose that $A\in S_{m,n}$ has a principal sub-tensor $A^J_r$ with $J\subseteq[n]$ with $|J| = r
(1 \le r \le n)$ such that all the entries of $A^J_r$ are one, and all the other entries of $A$ are
zero. Then $A$ is called a partially all one tensor, and denoted by $\mathcal{E}^J$.
If $J = [n]$, then we denote $\mathcal{E}^J$ simply by $\mathcal{E}$ and call it an all one tensor.
\end{definition}

When $m$ is even, if we denote by $x_J$ the $r$-dimensional sub-vector  of a vector $x\in\mathfrak{R}^n$,
with the components of $x_J$ indexed by $J$, then for any $x=(x_1,\ldots, x_n)^T \in\mathfrak{R}^n$,
we have \begin{equation}\label{eq41}
\mathcal{E}^Jx^m =\left(\sum\{x_j : j\in J\}\right)^m\ge0.\end{equation}
Thus  an even order partially all one tensor is positive semi-definite.

Let $S_k$ be the set of all permutations $\sigma$ of the integers $1,2,\ldots k$, and symbol
$$\delta_{i_1\ldots i_m}=\left\{\begin{array}{cc}
                                     1, & \mbox{ if } i_1=\ldots =i_m; \\
                                     0, & \mbox{ otherwise. }
                                   \end{array}\right.$$

\begin{theorem} \label{thm43}
Let $\mathbb{A} = (a_{i_1\ldots i_m}) \in T_{m,n}$ be a tensor such that $a_{i_1\ldots i_m}=a_{\sigma(i_1)\ldots \sigma(i_m)}$ for any $\sigma\in S_m$ whenever $a_{i_1\ldots i_m}>0$ and $\delta_{i_1\ldots i_m}=0$.

{\rm (1) } If  $\mathbb{A}$ is a $B_0$ tensor, then either $\mathbb{A}$ is a diagonally dominated  $Z$ tensor itself, or we have
\begin{equation}\label{eq42}
\mathbb{A} =\mathbb{M}+\sum\limits_{k=1}^s h_k\mathcal{E}^{J_k},
 \end{equation}
where $\mathbb{M}$ is a diagonally dominated  $Z$ tensor, $s$ is a positive integer, $h_k > 0$ and
$J_k\subseteq [n]$, for $k = 1, \ldots, s$, and $J_s\subsetneqq J_{s-1}\subsetneqq \ldots \subsetneqq J_1$.

{\rm (2) } If  $\mathbb{A}$ is a $B$ tensor, then either $\mathbb{A}$ is a strictly diagonally dominated  $Z$
tensor itself, or we have (\ref{eq42})  where $\mathbb{M}$ is a strictly diagonally dominated  $Z$ tensor.
\end{theorem}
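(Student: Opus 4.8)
The plan is to mimic the peeling argument of Song and Qi for symmetric $B/B_0$ tensors; the point is that the permutation hypothesis on the \emph{positive} off-diagonal entries of $\mathbb{A}$ is exactly what is needed to push that argument through without full symmetry. Concretely, I would build the decomposition greedily: starting from $\mathbb{A}$, repeatedly strip off a positive multiple of the partially all one tensor supported on the set of indices that currently carry a positive off-diagonal entry, and show that the $B_0$ property (resp.\ the $B$ property) survives each step. The tensor $\mathbb{M}$ left at the end then has no positive off-diagonal entry, i.e.\ is a $Z$ tensor, and being still a $B_0$ (resp.\ $B$) tensor it is diagonally dominated (resp.\ strictly diagonally dominated) by Theorem~\ref{thm33}.

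More precisely: put $\mathbb{A}^{(0)}=\mathbb{A}$ and, given a $B_0$ (resp.\ $B$) tensor $\mathbb{A}^{(k-1)}$, stop with $\mathbb{M}=\mathbb{A}^{(k-1)}$ if it has no positive off-diagonal entry; otherwise let $h_k>0$ be the smallest positive off-diagonal entry of $\mathbb{A}^{(k-1)}$, let $J_k\subseteq[n]$ be the set of indices occurring in some positive off-diagonal entry of $\mathbb{A}^{(k-1)}$ (so $|J_k|\ge 2$), and put $\mathbb{A}^{(k)}=\mathbb{A}^{(k-1)}-h_k\mathcal{E}^{J_k}$. Every positive off-diagonal entry of $\mathbb{A}^{(k-1)}$ lies over $J_k$ and is lowered by $h_k$, so the number of distinct positive off-diagonal values drops at each step; hence the process terminates after some $s\ge 0$ steps with $\mathbb{A}=\mathbb{M}+\sum_{k=1}^s h_k\mathcal{E}^{J_k}$, the case $s=0$ being precisely the first alternative in the statement. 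Since $\mathbb{A}^{(k)}\le\mathbb{A}^{(k-1)}$ entrywise, the supports only shrink, so $J_{k+1}\subseteq J_k$; collapsing runs of equal index sets (and adding the corresponding coefficients) turns the chain into a strict one $J_{s'}\subsetneqq\cdots\subsetneqq J_1$, which is the form $(\ref{eq42})$.

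The substance of the proof is the claim that $\mathbb{A}^{(k)}$ is again a $B_0$ (resp.\ $B$) tensor. I would first observe that the hypothesis on $\mathbb{A}$ is inherited: since $a^{(k-1)}_{i_1\ldots i_m}$ is obtained from $a_{i_1\ldots i_m}$ by subtracting a quantity depending only on the set $\{i_1,\ldots,i_m\}$, every positive off-diagonal entry of $\mathbb{A}^{(k-1)}$ lies over a positive off-diagonal entry of $\mathbb{A}$ and is therefore invariant under permutations of its indices; in particular, any $i\in J_k$ is the \emph{first} index of some positive off-diagonal entry $a^{(k-1)}_{ij_2\ldots j_m}\ge h_k$ of $\mathbb{A}^{(k-1)}$. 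The $B_0$ (resp.\ $B$) comparison inequality for row $i$ then gives $R^{(k-1)}_i\ge n^{m-1}a^{(k-1)}_{ij_2\ldots j_m}\ge n^{m-1}h_k\ge|J_k|^{m-1}h_k$ (resp.\ with a strict first inequality), where $R^{(k-1)}_i$ denotes the $i$-th row sum; since the $i$-th row sum of $h_k\mathcal{E}^{J_k}$ is $h_k|J_k|^{m-1}$ for $i\in J_k$ and $0$ otherwise, all row sums of $\mathbb{A}^{(k)}$ stay nonnegative (resp.\ positive). For the comparison inequality itself: in a row $i\notin J_k$ nothing changes; for $i\in J_k$ and an off-diagonal position all of whose indices lie in $J_k$, both sides drop by $h_k$ weighted respectively by $1$ and by $|J_k|^{m-1}/n^{m-1}\le 1$, so the inequality only improves; and for $i\in J_k$ and an off-diagonal position with some index outside $J_k$, that entry is unchanged and must be $\le 0$ (a positive off-diagonal entry forces all of its indices into $J_k$), while the left-hand side $R^{(k)}_i/n^{m-1}$ is $\ge 0$ (resp.\ $>0$), so the inequality holds. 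Thus the invariant is maintained, and at termination $\mathbb{M}$ is a $Z$ tensor which is a $B_0$ (resp.\ $B$) tensor, hence diagonally dominated (resp.\ strictly diagonally dominated) by Theorem~\ref{thm33}; this proves (1) (resp.\ (2)).

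I expect the only real obstacle to be the row-sum estimate $R^{(k-1)}_i\ge|J_k|^{m-1}h_k$ for $i\in J_k$, which is what keeps the remaining tensor a $B_0$ ($B$) tensor rather than merely a $Z$ tensor. It rests on two points: the trivial bound $|J_k|\le n$, and — crucially — the permutation hypothesis on $\mathbb{A}$, which is needed to relocate the index $i$ into the first slot of a positive off-diagonal entry so that the $B_0$ ($B$) comparison inequality for that very row may be applied. Once this estimate is in hand, everything else is bookkeeping, and the remark that a positive off-diagonal entry must have all of its indices in $J_k$ handles the "uncovered'' positions cleanly.
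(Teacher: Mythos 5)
Your proof is correct and follows essentially the same peeling argument as the paper: repeatedly subtract $h_k\mathcal{E}^{J_k}$ supported on the set of indices carrying positive off-diagonal entries, use the permutation hypothesis to place each $i\in J_k$ in the first slot of a positive entry so that the $B_0$ ($B$) comparison inequality for row $i$ yields the row-sum bound, verify that the $B_0$ ($B$) property survives each step, and finish with Theorem \ref{thm33}. The only deviation is your choice of $h_k$ as the globally smallest positive off-diagonal entry, whereas the paper takes $h_k=\min_{i\in J_k}d_i$ with $d_i$ the largest off-diagonal entry of row $i$, which makes $J_{k+1}\subsetneqq J_k$ automatic and avoids your step of collapsing consecutive equal index sets; this is immaterial.
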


\begin{proof}
Now we show (1) holds.   Define $J(\mathbb{A}) \subseteq [n]$ as
$$J(\mathbb{A}) = \{i \in [n] : \mbox{ there is at least one positive off-diagonal entry in the $i$th row of }   \mathbb{A}\}.$$

\noindent {\bf Case 1: } $J(\mathbb{A})=\emptyset$.

Then $\mathbb{A}$ is a $Z$ tensor, thus a diagonally dominated  $Z$ tensor by Theorem \ref{thm33}. The result holds.

\noindent {\bf Case 2: } $J(\mathbb{A})\not=\emptyset$.

 Let $\mathbb{A}_1 =\mathbb{A}$. For each $i\in J(\mathbb{A}_1)=J_1$, let $d_i$ be the value of the largest off-diagonal entry in the $i$th row of $\mathbb{A}_1$. Let
$$h_1 = \min\{d_i : i\in J_1\}, \quad |J_1|=r\le n.$$
Then $h_1 > 0$.

By the definitions of $\mathbb{A}_1$ and $J_1$,  we know that for the indices $i, i_2, \ldots, i_m$ with
$a_{ii_2\ldots i_m}>0$ and $\delta_{ii_2\ldots i_m}=0$, we have
\begin{equation*}\label{eq43}
i, i_2, \ldots, i_m\in J_1.\end{equation*}
This implies that for the indices $i_1, i_2, \ldots, i_m$ with $\delta_{i_1i_2\ldots i_m}=0$,
if there exists some $j\in[m]$ such that $i_j\not\in J_1$, then $a_{i_1 i_2 \ldots i_m}\le0$.

Now we consider
$$\mathbb{A}_2 = \mathbb{A}_1 -h_1\mathcal{E}^{J_1}=(a_{i_1i_2\ldots i_m}^{(2)}).$$
 It is clear that $a_{i_1\ldots i_m}^{(2)}=a_{\sigma(i_1)\ldots \sigma(i_m)}^{(2)}$ for any $\sigma\in S_m$ whenever $a_{i_1\ldots i_m}^{(2)}>0$ and $\delta_{i_1\ldots i_m}=0$. Now we  show that  $\mathbb{A}_2$ is still a $B_0$ tensor.

  By the definition of $\mathcal{E}^{J_1}$, we have
$$a_{i_1 i_2 \ldots i_m}^{(2)}=\left\{
   \begin{array}{ll}
   a_{i_1 i_2 \ldots i_m}-h_1, & \mbox { if }  i_1,i_2,\ldots,i_m\in J_1;\\
     a_{i_1 i_2 \ldots i_m},  &\mbox { if there exists some } i_j\not\in J_1 \mbox { where }  j\in [m].
   \end{array} \right.$$
Thus for the index $i\in J_1 $, there exist indices $j_2,\ldots, j_m\in J_1$  with $\delta_{ij_2\ldots j_m}=0$
such that $a_{i j_2 \ldots j_m}^{(2)}=a_{i j_2 \ldots j_m}-h_1\geq 0$, then by  $\mathbb{A}$ is a $B_0$ tensor, we have

$\frac{1}{n^{m-1}}\left(\sum\limits_{i_2,\ldots, i_m=1}^na_{i i_2 \ldots i_m}^{(2)}\right)$

\noindent\hskip.2cm $=\frac{1}{n^{m-1}}\left(\sum\limits_{i_2,\ldots, i_m\in J_1}(a_{i i_2 \ldots i_m}-h_1)+
\sum\limits_{\mbox { there exists some } i_j\not\in J_1 \mbox { where } 2\leq j\leq m}a_{i i_2 \ldots i_m}\right)$

\noindent\hskip.2cm $=\frac{1}{n^{m-1}}\left(\sum\limits_{i_2,\ldots, i_m=1}^na_{i i_2 \ldots i_m}\right)-\frac{r^{m-1}}{n^{m-1}}h_1$
\vskip.2cm
\noindent\hskip.2cm $\ge \max\{a_{i j_2 \ldots j_m}: \delta_{ij_2\ldots j_m}=0\}-h_1$
\vskip.2cm
\noindent\hskip.2cm $=\max\{a_{i j_2 \ldots j_m}^{(2)}: \delta_{ij_2\ldots j_m}=0\}$
\vskip.2cm
\noindent\hskip.2cm $\geq 0$,
\vskip.2cm
\noindent and for the other index $i\not\in J_1 $, we know for any indices $j_2,\ldots, j_m\in [n]$  with $\delta_{ij_2\ldots j_m}=0$,
  $a_{i j_2 \ldots j_m}^{(2)}=a_{i j_2 \ldots j_m}\leq 0$ and $a_{i i \ldots i}^{(2)}=a_{i i \ldots i}$, then by  $\mathbb{A}$ is a $B_0$ tensor, we have
$$\sum\limits_{i_2,\ldots, i_m=1}^na_{i i_2 \ldots i_m}^{(2)}=\sum\limits_{i_2,\ldots, i_m=1}^na_{i i_2 \ldots i_m}\geq 0,$$

\noindent and
\vskip.2cm
 $\frac{1}{n^{m-1}}\left(\sum\limits_{i_2,\ldots, i_m=1}^na_{i i_2 \ldots i_m}^{(2)}\right)$

\noindent\hskip.2cm $=\frac{1}{n^{m-1}}\left(\sum\limits_{i_2,\ldots, i_m=1}^na_{i i_2 \ldots i_m}\right)$
\vskip.2cm
\noindent\hskip.2cm $\ge \max\{a_{i j_2 \ldots j_m}: \delta_{ij_2\ldots j_m}=0\}$
\vskip.2cm
\noindent\hskip.2cm $=\max\{a_{i j_2 \ldots j_m}^{(2)}: \delta_{ij_2\ldots j_m}=0\}.$
\vskip.2cm

Therefore  $\mathbb{A}_2=(a_{i_1\ldots i_m}^{(2)})$ is still a $B_0$ tensor such that $a_{i_1\ldots i_m}^{(2)}=a_{\sigma(i_1)\ldots \sigma(i_m)}^{(2)}$ for any $\sigma\in S_m$ whenever $a_{i_1\ldots i_m}^{(2)}>0$ and $\delta_{i_1\ldots i_m}=0$.

We now replace $\mathbb{A}_1$ by $\mathbb{A}_2$, and repeat this process. We see that
$$J_2=J(\mathbb{A}_2) = \{i \in [n] : \mbox{ there is at least one positive off-diagonal entry in the $i$th row of }  \mathbb{A}_2\}.$$
is a proper subset of $ J(\mathbb{A}_1)=J_1$. Repeat this process until $J_{s+1}=J(\mathbb{A}_{s+1})=\emptyset$. Let $\mathbb{M}=\mathbb{A}_{s+1}$.
We see that (\ref{eq42}) holds. It is obvious that $s$ is a positive integer, $h_k > 0$ and
$J_k\subseteq [n]$, for $k = 1, \ldots, s$, and $J_s\subsetneqq J_{s-1}\subsetneqq \ldots \subsetneqq J_1$. This proves (1).

The proof of (2) is similar to the proof of (1), so we omit it.
\end{proof}

By Theorem \ref{thm43} and the result (2) of Theorem \ref{thm13}, we can obtain the following results immediately.

\begin{corollary}{\rm (\cite{SQ2}, Theorem 4)}

{\rm(1) } An even order symmetric $B_0$ tensor  is a $P_0$ tensor and positive semi-definite.

{\rm(2) } An even order symmetric $B$ tensor is a $P$ tensor and positive definite.
\end{corollary}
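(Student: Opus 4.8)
The plan is to combine the decomposition furnished by Theorem \ref{thm43} with the positive semi-definiteness of its constituent pieces, and then invoke the result (2) of Theorem \ref{thm13} to pass from (semi-)definiteness to the $P_0$ ($P$) property. The first point I would record is that a symmetric tensor automatically meets the hypothesis of Theorem \ref{thm43}: for $\mathbb{A}\in S_{m,n}$ the identity $a_{i_1\ldots i_m}=a_{\sigma(i_1)\ldots\sigma(i_m)}$ holds for every $\sigma\in S_m$ and every index tuple, in particular whenever $a_{i_1\ldots i_m}>0$ and $\delta_{i_1\ldots i_m}=0$. Hence Theorem \ref{thm43} is applicable to any symmetric $B_0$ (resp. $B$) tensor.

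For part (1), let $\mathbb{A}\in S_{m,n}$ be an even order symmetric $B_0$ tensor. By Theorem \ref{thm43}(1) there are two cases. If $\mathbb{A}$ is itself a diagonally dominated $Z$ tensor, then it is positive semi-definite directly by Corollary \ref{cor35}. Otherwise $\mathbb{A}=\mathbb{M}+\sum_{k=1}^s h_k\mathcal{E}^{J_k}$ with $\mathbb{M}$ a diagonally dominated $Z$ tensor and each $h_k>0$; since $\mathbb{A}$ and each $\mathcal{E}^{J_k}$ are symmetric, the remainder $\mathbb{M}$ is symmetric as well. For any $x\in\mathfrak{R}^n$ we then have $\mathbb{A}x^m=\mathbb{M}x^m+\sum_{k=1}^s h_k\,\mathcal{E}^{J_k}x^m$, where $\mathbb{M}x^m\ge 0$ by Corollary \ref{cor35} and $\mathcal{E}^{J_k}x^m\ge 0$ by (\ref{eq41}), so $\mathbb{A}x^m\ge 0$. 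In either case $\mathbb{A}$ is positive semi-definite, and by the result (2) of Theorem \ref{thm13} it is therefore a $P_0$ tensor.

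For part (2), I would run the identical argument using Theorem \ref{thm43}(2), which produces a strictly diagonally dominated symmetric $\mathbb{M}$. The only change is that Corollary \ref{cor35} now yields $\mathbb{M}x^m>0$ for every nonzero $x$; adding the nonnegative terms $h_k\mathcal{E}^{J_k}x^m$ keeps the sum strictly positive, so $\mathbb{A}x^m>0$ for all $x\ne 0$ and $\mathbb{A}$ is positive definite. Again by the result (2) of Theorem \ref{thm13}, positive definiteness of a symmetric tensor is equivalent to being a $P$ tensor, which completes the argument.

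There is essentially no deep obstacle here, as all of the substantive work has already been front-loaded into Theorem \ref{thm43} and Corollary \ref{cor35}; the single point that requires a line of care is verifying that the remainder $\mathbb{M}$ inherits symmetry, so that Corollary \ref{cor35}—stated only for tensors in $S_{m,n}$—genuinely applies. This is immediate because the iterative construction in the proof of Theorem \ref{thm43} subtracts only the symmetric partially all one tensors $\mathcal{E}^{J_k}$ from the symmetric starting tensor, so symmetry is preserved at every step.
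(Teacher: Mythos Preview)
Your argument is correct and follows essentially the same route as the paper: apply Theorem~\ref{thm43} to the symmetric tensor, observe that the diagonally dominated piece $\mathbb{M}$ and each $h_k\mathcal{E}^{J_k}$ contribute nonnegatively to $\mathbb{A}x^m$ (via (\ref{eq41})), and then invoke Theorem~\ref{thm13}(2). The only cosmetic difference is that the paper cites Theorem~\ref{thm36} rather than Corollary~\ref{cor35} for the (semi-)definiteness of the $Z$-tensor piece; your explicit remark that $\mathbb{M}$ inherits symmetry, which the paper leaves implicit, is a welcome clarification.
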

\begin{proof}
Now we only show (1) holds, the proof of (2) is similar, we omit it.

Let $m$ be even and $\mathbb{A}=(a_{i_1\ldots i_m})$ be a symmetric $B_0$ tensor.
By Theorem \ref{thm43}, if $\mathbb{A}$ itself is a diagonally dominated symmetric $Z$ tensor,
then it is  a $P_0$ tensor and positive semi-definite by Theorem \ref{thm36}.
Otherwise, (\ref{eq42}) holds with $s > 0$. Let $x=(x_1, \ldots, x_n)^T \in\mathfrak{R}^n$. Then by (\ref{eq41}), (\ref{eq42}) and
$\mathbb{M}$ is a diagonally dominated symmetric $Z$ tensor, we have
$$\mathbb{B}x^m =\mathbb{M}x^m+\sum\limits_{k=1}^s h_k\mathcal{E}^{J_k}x^m=\mathbb{M}x^m+\sum\limits_{k=1}^s h_k\left(\sum\{x_j : j\in J_k\}\right)^m\ge0.$$
 This implies (1) holds by the definition of positive semi-definite and result (2) of Theorem \ref{thm13}.
\end{proof}

\end{document}